\theoremstyle{plain}
\newtheorem{theorem}{Theorem}[section]
\newtheorem{lemma}[theorem]{Lemma}
\newtheorem{proposition}[theorem]{Proposition}
\newtheorem{corollary}[theorem]{Corollary}
\newtheorem{definition}[theorem]{Definition}
\newtheorem{problem}[theorem]{Problem}
\theoremstyle{definition}
\newtheorem{remark}[theorem]{Remark}
\numberwithin{equation}{section} \setlength{\textwidth}{6.65in}
\begin{document}
\title{ Joint spectra of spherical Aluthge transforms of commuting $n$-tuples of Hilbert space operators}
\author{Chafiq Benhida}
\address{UFR de Math\'{e}matiques, Universit\'{e} des Sciences et
Technologies de Lille, F-59655 Villeneuve-d'Ascq Cedex, France%
}
\email{chafiq.benhida@univ-lille.fr}
\author{Ra\'{u}l E. Curto}
\address{Department of Mathematics, The University of Iowa, Iowa City, Iowa
52242}
\email{raul-curto@uiowa.edu}
\author{Sang Hoon Lee}
\address{Department of Mathematics, Chungnam National University, Daejeon,
34134, Republic of Korea}
\email{slee@cnu.ac.kr}
\author{Jasang Yoon}
\address{School of Mathematical and Statistical Sciences, The University of
Texas Rio Grande Valley, Edinburg, Texas 78539, USA}
\email{jasang.yoon@utrgv.edu}
\thanks{The first named author was partially supported by Labex CEMPI (ANR-11-LABX-0007-01).}
\thanks{The second named author was partially supported by NSF Grant
DMS-1302666.}
\thanks{The third named author was partially supported by NRF
(Korea) grant No. 2016R1D1A1B03933776.}
\thanks{The fourth named author was partially supported by a grant from the
University of Texas System and the Consejo Nacional de Ciencia y Tecnolog%
\'{\i}a de M\'{e}xico (CONACYT)}
\subjclass[2000]{Primary 47B20, 47B37, 47A13, 28A50; Secondary 44A60, 47-04,
47A20}
\keywords{spherical Aluthge transform, Taylor spectrum, Taylor essential spectrum, Fredholm pairs, Fredholm index}


\begin{abstract}
Let $\mathbf{T} \equiv (T_1,\cdots,T_n)$ be a commuting $n$-tuple of operators on a Hilbert space $\mathcal{H}$, and let $T_i \equiv V_i P \; (1 \le i \le n)$ be its canonical joint polar decomposition (i.e., $P:=\sqrt{T_1^*T_1+\cdots+T_n^*T_n}$, $(V_1,\cdots,V_n)$ a joint partial isometry, and $\bigcap_{i=1}^n \ker T_i = \bigcap_{i=1}^n \ker V_i = \ker P)$. \ The spherical Aluthge transform of $\mathbf{T}$ is the (necessarily commuting) $n$-tuple $\hat{\mathbf{T}}:=(\sqrt{P}V_1\sqrt{P},\cdots,\sqrt{P}V_n\sqrt{P})$. \ We prove that $\sigma_T(\hat{\mathbf{T}})=\sigma_T(\mathbf{T})$, where $\sigma_T$ denotes Taylor spectrum. \ We do this in two stages: away from the origin we use tools and techniques from criss-cross commutativity; at the origin we show that the left invertibility of $\mathbf{T}$ or $\hat{\mathbf{T}}$ implies the invertibility of $P$. \ As a consequence, we can readily extend our main result to other spectral systems that rely on the Koszul complex for their definition.

\vskip 0.5\baselineskip

{\it To cite this article: C. Benhida, R.E. Curto, S.H. Lee, J. Yoon, C. R. Acad. Sci. Paris, Ser. I 340 (2005).}

\vskip 0.5\baselineskip

{\bf R\'esum\'e}. \ Soit $\mathbf{T} \equiv (T_1,\cdots,T_n)$ un n-uplet commutatif d'opérateurs sur un espace de Hilbert $\mathcal{H}$, et soit $T_i \equiv V_i P \; (1 \le i \le n)$  sa décomposition polaire jointe canonique  (i.e., $P:=\sqrt{T_1^*T_1+\cdots+T_n^*T_n}$, $(V_1,\cdots,V_n)$  est une isométrie partielle jointe, et $\bigcap_{i=1}^n \ker T_i = \bigcap_{i=1}^n \ker V_i = \ker P)$. \  La transformée d'Aluthge sphérique de $\mathbf{T}$  est le $n$-uplet (nécessairement commutatif)  $\hat{\mathbf{T}}:=(\sqrt{P}V_1\sqrt{P},\cdots,\sqrt{P}V_n\sqrt{P})$. \  Nous démontrons que $\sigma_T(\hat{\mathbf{T}})=\sigma_T(\mathbf{T})$, où $\sigma_T$ d\'esigne le spectre de Taylor. \ Nous procédons pour cela en deux étapes:  En dehors de l'origine nous utilisons les outils et les techniques de la commutativité criss-cross; à l'origine nous prouvons que l'inversibilité  à gauche de $\mathbf{T}$ ou de  $\hat{\mathbf{T}}$ implique  l'inversibilité de $P$. \ Comme conséquence, nous pouvons étendre notre résultat à d'autres syst\`emes spectraux définis à partir des complexes de Koszul. 

\vskip 0.5\baselineskip

{\it Pour citer cet article~: C. Benhida, R.E. Curto, S.H. Lee, J. Yoon, C. R. Acad. Sci. Paris, Ser. I 340 (2005).}

\end{abstract}

\maketitle


\section{Introduction}
\label{Int}
Let $\mathcal{H}$ be a complex infinite dimensional Hilbert space, let $\mathcal{B}(\mathcal{H})$ denote the algebra of bounded linear operators on $\mathcal{H}$, and let $T \in \mathcal{B}(\mathcal{H})$. \ For $T \equiv V|T|$ the canonical polar decomposition of $T$, we let $\tilde{T}:=|T|^{1/2}V|T|^{1/2}$ denote the Aluthge transform of $T$ \cite{Alu}. \ It is well known that $T$ is invertible if and only if $\tilde{T}$ is invertible; moreover, the spectra of $T$ and $\tilde{T}$ are equal. \ Over the last two decades, considerable attention has been given to the study of the Aluthge transform; cf. \cite{Ando}--\cite{BRZ}, \cite{CJL}, \cite{CuYo1}--\cite{LLY2}, \cite{Rion}, \cite{W}--\cite{Yam}). \ Moreover, the Aluthge transform has been generalized to the case of powers of $|T|$ different from $\frac{1}{2}$ (\cite{Ben}, \cite{BCKL1}, \cite{Cha}, \cite{LLY}) and to the case of commuting pairs of operators (\cite{CuYo1}, \cite{CuYo2}).

In this note, we focus on the spherical Aluthge transform \cite{CuYo2}. \ Although our results hold for arbitrary $n>2$, for the reader's convenience we will focus on the case $n=2$; that is, the case of commuting pairs of Hilbert space operators. \ Let $\mathbf{T} \equiv (T_1,T_2)$ be a commuting pair of operators on $\mathcal{H}$. \ We now consider the canonical polar decomposition of the column operator 
$\left(
\begin{array}{c}
T_{1} \\
T_{2}%
\end{array}%
\right)$; that is,
$\left(
\begin{array}{c}
T_{1} \\
T_{2}%
\end{array}%
\right)
\equiv \left(
\begin{array}{c}
V_{1} \\
V_{2}%
\end{array}%
\right) P$,
where $P:=\sqrt{T_1^*T_1+T_2^*T_2}$ and $\left(
\begin{array}{c}
V_{1} \\
V_{2}%
\end{array}%
\right)$ is a (joint) partial isometry, and subject to the constraint $\bigcap_{i=1}^2 \ker T_i = \bigcap_{i=1}^2 \ker V_i = \ker P$. \ 

The {\it spherical Aluthge transform} of $\mathbf{T}$ is the (necessarily commuting) $n$-tuple 
\begin{equation} \label{sphAlu}
\widehat{\mathbf{T}}:=(\sqrt{P}V_1\sqrt{P},\cdots,\sqrt{P}V_n\sqrt{P}) \; \; \; \textrm{(\cite{CuYo1}, \cite{CuYo2})}.
\end{equation}

For a commuting pair $\mathbf{T} \equiv (T_1,T_2)$ of operators on $\mathcal{H}$, the Koszul complex associated with $\mathbf{T}$ is given as
\begin{equation*}
K(\mathbf{T,}\mathcal{H}):0\overset{0}{\rightarrow }\mathcal{H}\overset{\left(
\begin{array}{c}
T_{1} \\
T_{2}%
\end{array}%
\right)}{\longrightarrow }\mathcal{H}\oplus \mathcal{H}
\overset{\left(-T_2 \; T_1 \right)}{\longrightarrow }\mathcal{H}\overset{0}{\longrightarrow }0.
\end{equation*}

\begin{definition}
A commuting pair $\mathbf{T}$ is said to be \textit{(Taylor) invertible} if its associated Koszul
complex $K(\mathbf{T,}\mathcal{H})$ is exact. \ The Taylor spectrum of $\mathbf{T}$ is 
\begin{equation*}
\begin{tabular}{l}
$\sigma _{T}(\mathbf{T}):=\left\{ (\lambda _{1},\lambda _{2})\in \mathbb{C}%
^{2}:K\left( \left( T_{1}-\lambda _{1},T_{2}-\lambda _{2}\right) \mathbf{,}%
\text{ }\mathcal{H}\right) \text{ is not invertible}\right\} $.%
\end{tabular}%
\end{equation*}
The pair $\mathbf{T}$ is called \textit{Fredholm} if each map in the Koszul complex $K(\mathbf{T},\mathcal{H})$ has closed range and all the homology quotients are finite-dimensional. \ The Taylor essential spectrum is
\begin{equation*}
\begin{tabular}{l}
$\sigma _{Te}(\mathbf{T}):=\left\{ (\lambda _{1},\lambda _{2})\in \mathbb{C}%
^{2}:\left( T_{1}-\lambda _{1},T_{2}-\lambda _{2}\right) \text{\textbf{\ }is
not Fredholm}\right\} $.%
\end{tabular}%
\end{equation*}
\end{definition}

J.L. Taylor showed in \cite{Tay1} and \cite{Tay2} that, if $\mathcal{H}\neq \{0\}$, then $\sigma _{T}(%
\mathbf{T})$ is a nonempty, compact subset of the polydisc of multiradius $r(%
\mathbf{T}):=(r(T_{1}),r(T_{2})),$ where $r(T_{i})$ is the spectral radius
of $T_{i}$ \ ($i=1,2$). \ (For additional facts
about these joint spectra, the reader is referred to \cite{Cu1}--\cite{Cu3} and \cite{Vas}.)

As shown in \cite{Cu2} and \cite{Appl}, the Fredholmness of $\mathbf{T}$ can be detected in the Calkin algebra $\mathcal{Q}(\mathcal{H}):=\mathcal{B}(\mathcal{H})/\mathcal{K}(\mathcal{H})$. \ (Here $\mathcal{K}$ denotes the closed two--sided ideal of compact operators; we also let $\pi:\mathcal{B}(\mathcal{H}) \longrightarrow \mathcal{Q}(\mathcal{H})$ denote the quotient map.) \ Concretely, $\mathbf{T}$ is Fredholm on $\mathcal{H}$ if and only if the pair of left multiplication operators $L_{\pi(\mathbf{T})}:=(L_{\pi(T_1)},L_{\pi(T_2)})$ is Taylor invertible when acting on $\mathcal{Q}(\mathcal{H})$. \ In particular, $\mathbf{T}$ is left Fredholm on $\mathcal{H}$ if and only if $L_{\pi(\mathbf{T})}$ is bounded below on $\mathcal{Q}(\mathcal{H})$.

\begin{problem}
\label{Problem 5} Let\textbf{\ }$\mathbf{T}\equiv (T_{1},T_{2})$ be a
commuting pair of operators.\newline
(i) Assume that $\mathbf{T}$ be (Taylor) invertible (resp. Fredholm). \ Is $\widehat{\mathbf{T}}$
also (Taylor) invertible (resp. Fredholm)?\newline
(ii) \ Is the Taylor spectrum (resp. Taylor essential spectrum) of $%
\widehat{\mathbf{T}}$ equal to that of $\mathbf{T}$?
\end{problem}

We first prove that $\sigma_T(\widehat{\mathbf{T}})=\sigma_T(\mathbf{T})$. \ We do this in two stages: away from the origin we use tools and techniques from criss-cross commutativity; at the origin we show that the left invertibility of $\mathbf{T}$ or $\widehat{\mathbf{T}}$ implies the invertibility of $P$; $P$ then helps establish an isomorphism between the relevant Koszul complexes. \ As a consequence, we can readily extend the above result to other spectral systems that rely on the Koszul complex for their definition, including spectral systems on $\mathcal{Q}(\mathcal{H})$.

\section{Main Results}
\label{MainResults}

Recall the joint polar decomposition of $\mathbf{T}$ and the spherical Aluthge transform of $\mathbf{T}$; cf. (\ref{sphAlu}). \ We now state our first main result. 

\begin{theorem}
\label{basic} Assume that $\mathbf{T}$ or $\widehat{\mathbf{T}}$ is left invertible; that is, the associated Koszul complex is exact at the left stage, and the range of the corresponding boundary map is closed. \ Then the operator $P$ is invertible.
\end{theorem}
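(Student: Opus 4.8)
The plan is to reduce the hypothesis to a single lower bound on the first boundary map of the Koszul complex and then extract the invertibility of $P$ from an explicit computation of the ``Laplacian'' at the left stage.

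Recall that the Koszul complex of $\mathbf{T}$ being exact at the left stage with closed range of the first boundary map means exactly that the column operator $\binom{T_1}{T_2}:\mathcal{H}\to\mathcal{H}\oplus\mathcal{H}$ is bounded below, i.e.\ there is $c>0$ with $\|T_1x\|^2+\|T_2x\|^2\ge c\|x\|^2$ for all $x\in\mathcal{H}$; similarly for $\widehat{\mathbf{T}}$. If $\mathbf{T}$ is left invertible, then since $\|T_1x\|^2+\|T_2x\|^2=\langle(T_1^*T_1+T_2^*T_2)x,x\rangle=\langle P^2x,x\rangle=\|Px\|^2$, the operator $P$ is bounded below, and being positive it is invertible. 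This disposes of the first case.

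For the case where $\widehat{\mathbf{T}}$ is left invertible, write $D:=\sqrt{P}$ and compute $\widehat{T}_i^{\,*}\widehat{T}_i=DV_i^*PV_iD$, so that $\widehat{T}_1^{\,*}\widehat{T}_1+\widehat{T}_2^{\,*}\widehat{T}_2=D\,A\,D$, where $A:=V_1^*PV_1+V_2^*PV_2\ge0$. The key observation is that $A$ is dominated by $P$: since $\binom{V_1}{V_2}$ is a (joint) partial isometry, for every $u\in\mathcal{H}$ we have $\langle Au,u\rangle=\langle PV_1u,V_1u\rangle+\langle PV_2u,V_2u\rangle\le\|P\|\,(\|V_1u\|^2+\|V_2u\|^2)\le\|P\|\,\|u\|^2$, whence $0\le A\le\|P\|\,I$. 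Now left invertibility of $\widehat{\mathbf{T}}$ supplies $c>0$ with $D\,A\,D\ge cI$, and therefore, for all $x\in\mathcal{H}$,
\begin{equation*}
c\|x\|^2\le\langle A D x,D x\rangle\le\|P\|\,\langle Dx,Dx\rangle=\|P\|\,\langle Px,x\rangle .
\end{equation*}
(Here $\|P\|>0$, since otherwise $P=0$, forcing $\widehat{\mathbf{T}}=\mathbf{0}$, which is not left invertible unless $\mathcal{H}=\{0\}$.) Hence $P\ge(c/\|P\|)\,I>0$ and $P$ is invertible.

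There is no deep obstacle here; the one point requiring care is the second case, where one must correctly identify the left-stage Laplacian of $\widehat{\mathbf{T}}$ as the symmetric product $\sqrt{P}\,A\,\sqrt{P}$ and then bound $A$ above by $\|P\|\,I$ using the partial-isometry property of $(V_1,V_2)$; this domination is precisely what lets one cancel the outer factors $\sqrt{P}$ and conclude that $P$ itself is bounded below. The same computation goes through verbatim for commuting $n$-tuples, replacing the two-term sums by sums over $1\le i\le n$.
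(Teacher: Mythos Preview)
Your proof is correct and follows essentially the same route as the paper: Case~1 is identical, and in Case~2 both arguments use the joint partial isometry property $V_1^*V_1+V_2^*V_2\le I$ to bound $\sum_i\|\sqrt{P}V_i\sqrt{P}x\|^2$ above by $\|P\|\,\|\sqrt{P}x\|^2$, thereby showing $\sqrt{P}$ (equivalently $P$) is bounded below. Your write-up is slightly more explicit in packaging the left-stage Laplacian as $\sqrt{P}A\sqrt{P}$ with $A\le\|P\|I$, but the substance is the same.
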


\begin{proof}
\noindent
{\bf Case 1}. \ If $\mathbf{T}$ is left invertible, then $T_1^*T_1+T_2^*T_2$ is invertible, and therefore $P$ is invertible.

\noindent {\bf Case 2}. \ If $\widehat{\mathbf{T}}$ is left invertible, then it is bounded below; that is, there exists a constant $c>0$ such that
$$
\left\|\sqrt{P}V_1\sqrt{P}x\right\|^2+\left\|\sqrt{P}V_2\sqrt{P}x\right\|^2\ge c^2 \left\|x\right\|^2.
$$
Since $(V_1,V_2)$ is a joint partial isometry, it readily follows that 
$$
\left\|\sqrt{P}x\right\|^2+\left\|\sqrt{P}x\right\|^2\ge \frac{c^2}{\left\|P\right\|} \left\|x\right\|^2.
$$
As a result, $\sqrt{P}$ is bounded below, so $P$ is invertible.
\end{proof}

We are now ready to state our second main result. 

\begin{theorem}
\label{Theorem 7} Let $\mathbf{T}=\mathbf{(}T_{1},T_{2})$ be a
commuting pair of operators on $\mathcal{H}$. \ Then
$$
\mathbf{T} \textrm{ is (Taylor) invertible } \Longleftrightarrow \widehat{\mathbf{T}} \textrm{ is (Taylor) invertible}.
$$
\end{theorem}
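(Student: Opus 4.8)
The plan is to reduce the Taylor invertibility of $\mathbf{T}$ and $\widehat{\mathbf{T}}$ to a common intermediate object, treating the question as an equivalence between two Koszul complexes that differ only by the insertion of square roots of $P$. First I would invoke Theorem~\ref{basic}: if either $\mathbf{T}$ or $\widehat{\mathbf{T}}$ is invertible, then in particular it is left invertible, so $P$ is invertible; hence $\sqrt{P}$, $V_i=T_i P^{-1}$, and all the relevant operators become bounded with bounded inverses where needed. Once $P$ is invertible we may write $T_i = V_i P$ with $V_i$ a genuine (co-isometric in the joint sense) partial isometry whose initial space is all of $\mathcal{H}$, and $\widehat{T}_i = \sqrt{P}\,V_i\,\sqrt{P}$.

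Next I would exhibit an explicit isomorphism of Koszul complexes. The key algebraic identity is that the pair $(\widehat{T}_1,\widehat{T}_2)$ is, up to conjugation by $\sqrt{P}$, the same as $(\sqrt{P}V_1\sqrt{P}^{\,-1},\sqrt{P}V_2\sqrt{P}^{\,-1})$ multiplied by $P$ on the appropriate side; more precisely, $\widehat{T}_i = \sqrt{P}\,V_i\,\sqrt{P} = (\sqrt{P}\,V_i\,\sqrt{P}^{\,-1})\cdot P$, and also $T_i = V_i P$. Since $P$ is invertible, invertibility of the pair $(\widehat{T}_1,\widehat{T}_2)$ is equivalent to invertibility of $(\sqrt{P}V_1\sqrt{P}^{\,-1},\sqrt{P}V_2\sqrt{P}^{\,-1})$, which is the conjugate of $(V_1,V_2)$ by the invertible operator $\sqrt{P}$, hence equivalent to invertibility of $(V_1,V_2)$. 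Likewise, invertibility of $(T_1,T_2)=(V_1P,V_2P)$ is equivalent to invertibility of $(V_1,V_2)$. The tool that makes the ``multiply by $P$ on one side'' step legitimate is precisely criss-cross commutativity, or, more simply here, the observation that for an invertible $P$ the map on the Koszul complex of $(V_1P,V_2P)$ given by $\mathrm{diag}(I, P^{-1}\oplus P^{-1}, P^{-1})$ (suitably arranged) is a chain isomorphism onto the Koszul complex of $(V_1,V_2)$; the commuting relations among the $V_i$ and $P$ needed for this to be a chain map follow from the commutativity of $\mathbf{T}$ together with the uniqueness in the joint polar decomposition.

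I would then assemble these equivalences: $\mathbf{T}$ invertible $\iff$ $(V_1,V_2)$ invertible $\iff$ $\widehat{\mathbf{T}}$ invertible, where in the forward direction of each biconditional one first uses Theorem~\ref{basic} to secure invertibility of $P$ (so that the conjugating operators exist), and in the reverse direction one notes that invertibility of $\widehat{\mathbf{T}}$ (resp.\ $\mathbf{T}$) again forces $P$ invertible by Theorem~\ref{basic}, closing the loop. Care must be taken that Theorem~\ref{basic} is applied to whichever tuple is hypothesized invertible, since a priori invertibility of one does not hand us invertibility of $P$ ``for free'' until Theorem~\ref{basic} is cited.

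The main obstacle I anticipate is verifying that the diagonal maps built from powers of $P$ (and $\sqrt{P}$) genuinely commute with the boundary maps of the Koszul complexes, i.e.\ that they form chain maps rather than merely isomorphisms degree by degree. This requires the identities $V_i P = P' V_i$-type relations linking the $V_i$'s with $P$, which in the commuting joint setting are more delicate than in the single-operator case: unlike $T=V|T|$ where $|T|$ commutes with $|T|$ trivially, here one must use that $\sum_i T_i^*T_i$ is the square of $P$ and that the $T_i$ commute to push $P$ (and hence $\sqrt{P}$, via a limiting/functional-calculus argument or via the ``criss-cross'' lemmas the authors allude to) past the $V_i$'s on the nose. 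Once that commutation bookkeeping is in place, the rest is the standard fact that conjugating a Koszul complex by an invertible operator preserves exactness, and the theorem follows.
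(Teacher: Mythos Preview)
Your overall architecture is right: invoke Theorem~\ref{basic} to make $P$ invertible, then exhibit a chain isomorphism between the Koszul complexes of $\mathbf{T}$ and $\widehat{\mathbf{T}}$. But the specific isomorphism you propose, routing through the pair $(V_1,V_2)$, does not work, and the obstacle you flag at the end is fatal rather than merely technical. The pair $(V_1,V_2)$ need not commute: from $V_1PV_2P=V_2PV_1P$ and $P$ invertible you get only $V_1PV_2=V_2PV_1$, not $V_1V_2=V_2V_1$, so the Koszul complex of $(V_1,V_2)$ is not even defined. Likewise the diagonal maps you write down, e.g.\ $\mathrm{diag}(I,P^{-1}\oplus P^{-1},P^{-1})$, fail to intertwine the boundaries unless $P$ commutes with each $V_i$, which is false in general (already in the one-variable case $|T|$ commutes with $V$ only for quasinormal $T$). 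There is no ``push $P$ past $V_i$'' identity available here, and the criss-cross machinery does not manufacture one.

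The paper sidesteps this entirely. It builds the chain isomorphism \emph{directly} between $K(\mathbf{T},\mathcal{H})$ and $K(\widehat{\mathbf{T}},\mathcal{H})$, placing $\sqrt{P}$ at every stage. The intertwining relation one needs is
\[
\sqrt{P}\,T_i \;=\; \sqrt{P}\,V_iP \;=\; \sqrt{P}\,V_i\sqrt{P}\cdot\sqrt{P} \;=\; \widehat{T}_i\,\sqrt{P},
\]
which uses only $\sqrt{P}\sqrt{P}=P$ and no commutation whatsoever between $V_i$ and $P$. With $\sqrt{P}$ invertible (from Theorem~\ref{basic}), this is an honest isomorphism of complexes, so exactness transfers in both directions. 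The point you are missing is that the factor of $\sqrt{P}$ sitting on the \emph{left} in $\widehat{T}_i$ is exactly what absorbs the chain map, so nothing has to be pushed anywhere.
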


We now recall the notion of criss-cross commutativity.

\begin{definition} \label{criss}
Let $\mathbf{A} \equiv (A_1,\cdots,A_n)$ and $\mathbf{B} \equiv (B_1,\cdots,B_n)$ be two $n$-tuples  of operators on $\mathcal{H}$. \ We say that $\mathbf{A}$ and $\mathbf{B}$ criss-cross commute (or that $\mathbf{A}$ criss-cross commutes with $\mathbf{B}$) if $A_iB_jA_k=A_kB_jA_i$ and $B_iA_jB_k=B_kA_jB_i$ for all $i,j,k=1,\cdots,n$. \ Observe that we do not assume that $\mathbf{A}$ or $\mathbf{B}$ is commuting.
\end{definition}

\begin{definition}
Given two $n$-tuples $\mathbf{A}$ and $\mathbf{B}$ we define $\mathbf{AB}:=(A_1B_1,\cdots,A_nB_n)$ and $\mathbf{BA}:=(B_1A_1,\cdots,B_nA_n)$.
\end{definition}

\begin{remark}
It is an easy consequence of Definition \ref{criss} that, if $\mathbf{A}$ and $\mathbf{B}$ criss-cross commute and $\mathbf{AB}$ is commuting, then $\mathbf{BA}$ is also commuting.
\end{remark}

\begin{lemma} Let $\mathbf{T}\equiv(T_1,T_2)$ be a commuting pair of operators on $\mathcal{H}$, let $P:=\sqrt{T_1^*T_1+T_2^*T_2}$, and let $\widehat{\mathbf{T}}$ be its spherical Aluthge transform. \ Then $\mathbf{A} \equiv (A_1,A_2):=(\sqrt{P},\sqrt{P})$ and $\mathbf{B} \equiv (B_1,B_2):=(V_1\sqrt{P},V_2\sqrt{P})$ criss-cross commute. \ As a consequence, $\widehat{\mathbf{T}} (=\mathbf{B}\mathbf{A})$ is commuting.  
\end{lemma}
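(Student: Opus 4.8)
The plan is to verify the two families of identities in Definition~\ref{criss} directly, exploiting only the commutativity of $\mathbf{T}$ and the basic relations of the joint polar decomposition. Recall that $T_i = V_i P$, that $T_i T_j = T_j T_i$, and that $P = \sqrt{T_1^*T_1 + T_2^*T_2}$. With $A_i = \sqrt{P}$ for all $i$, the first family $A_i B_j A_k = A_k B_j A_i$ is immediate: both sides equal $\sqrt{P}\,V_j\sqrt{P}\,\sqrt{P} = \sqrt{P}\,V_j P$, since the two $A$-factors are literally the same operator $\sqrt{P}$. So the only real content is the second family, $B_i A_j B_k = B_k A_j B_i$, i.e.
\begin{equation*}
V_i\sqrt{P}\,\sqrt{P}\,V_k\sqrt{P} = V_k\sqrt{P}\,\sqrt{P}\,V_i\sqrt{P},
\end{equation*}
which simplifies to $V_i P V_k \sqrt{P} = V_k P V_i \sqrt{P}$. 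Since $V_i P = T_i$, this is exactly $T_i V_k \sqrt{P} = T_k V_i \sqrt{P}$.

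The key step, then, is to prove the operator identity $T_i V_k \sqrt{P} = T_k V_i \sqrt{P}$ for $i,k \in \{1,2\}$. The natural route is to first establish the "polynomial" version $T_i V_k P = T_k V_i P$, and then pass from $P$ to $\sqrt{P}$ by an approximation/functional-calculus argument. For the first part, note $T_i V_k P = T_i T_k = T_k T_i = T_k V_i P$ using commutativity of $\mathbf{T}$; so $T_i V_k$ and $T_k V_i$ agree on $\operatorname{ran} P$. To upgrade this to agreement after multiplication by $\sqrt{P}$ rather than $P$: since $T_i V_k P = T_k V_i P$ on all of $\mathcal{H}$, and $\operatorname{ran}\sqrt{P} \subseteq \overline{\operatorname{ran} P}$ with $\overline{\operatorname{ran} P} = \overline{\operatorname{ran}\sqrt{P}}$ (both equal $(\ker P)^\perp$), the bounded operators $T_i V_k$ and $T_k V_i$ agree on $\operatorname{ran} P$, hence on its closure, hence in particular on $\operatorname{ran}\sqrt{P}$. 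This gives $T_i V_k \sqrt{P}x = T_k V_i \sqrt{P}x$ for all $x$, which is the desired identity. Alternatively, one can observe $T_i V_k \sqrt{P} = T_i V_k P \cdot (\text{something})$ is not literally valid, so the cleaner formulation is the range-closure argument just described.

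I would then assemble the pieces: the first criss-cross family is trivial, the second reduces to $T_i V_k \sqrt{P} = T_k V_i \sqrt{P}$, and this follows from $T_iV_kP = T_iT_k = T_kT_i = T_kV_iP$ together with $\operatorname{ran}\sqrt{P} \subseteq \overline{\operatorname{ran} P}$. This establishes that $\mathbf{A}$ and $\mathbf{B}$ criss-cross commute. For the final clause, note $\mathbf{AB} = (\sqrt{P}\,V_1\sqrt{P},\, \sqrt{P}\,V_2\sqrt{P}) = \widehat{\mathbf{T}}$, wait — rather $\mathbf{BA} = (V_1\sqrt{P}\cdot\sqrt{P},\, V_2\sqrt{P}\cdot\sqrt{P})$; the tuple $\widehat{\mathbf{T}}$ with entries $\sqrt{P}\,V_i\sqrt{P}$ matches the product in the order stated in the Lemma. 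One first checks that $\mathbf{AB}$ (or the appropriately ordered product) is commuting — this is where the known fact that $\widehat{\mathbf{T}}$ is commuting, or a short direct computation using $T_iV_kP=T_kV_iP$, is invoked — and then the Remark preceding the Lemma yields that the reverse-order product is commuting as well.

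I expect the only genuine obstacle to be the passage from $P$ to $\sqrt{P}$: one must be careful that an identity holding on $\operatorname{ran} P$ does extend to $\operatorname{ran}\sqrt{P}$, which requires knowing $\overline{\operatorname{ran} P} = \overline{\operatorname{ran}\sqrt{P}}$ (equivalently $\ker P = \ker\sqrt{P}$, which holds since $P \ge 0$). Everything else is bookkeeping with the defining relations $T_i = V_iP$ and the commutativity of $\mathbf{T}$.
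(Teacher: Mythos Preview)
The paper states this lemma without proof, treating it as a routine verification; your argument supplies that verification correctly. The reduction of the second criss-cross identity to $T_iV_k\sqrt{P}=T_kV_i\sqrt{P}$ and the passage from $T_iV_kP=T_iT_k=T_kT_i=T_kV_iP$ to the $\sqrt{P}$-version via $\operatorname{ran}\sqrt{P}\subseteq\overline{\operatorname{ran}P}=(\ker P)^\perp$ is exactly the right idea, and your justification that $\ker P=\ker\sqrt{P}$ for $P\ge 0$ closes the only nontrivial gap.

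Two small clean-ups. First, with the given $\mathbf{A}$ and $\mathbf{B}$ one has $\mathbf{AB}=(\sqrt{P}V_1\sqrt{P},\sqrt{P}V_2\sqrt{P})=\widehat{\mathbf{T}}$ and $\mathbf{BA}=(V_1P,V_2P)=\mathbf{T}$; the parenthetical ``$\widehat{\mathbf{T}}\,(=\mathbf{BA})$'' in the lemma is a misprint for $\mathbf{AB}$, and your momentary hesitation there is warranted. Second, for the final clause do not invoke ``the known fact that $\widehat{\mathbf{T}}$ is commuting,'' since that is precisely the conclusion; instead use that $\mathbf{BA}=\mathbf{T}$ is commuting by hypothesis and apply the Remark (the criss-cross relation is symmetric in $\mathbf{A},\mathbf{B}$) to get that $\mathbf{AB}=\widehat{\mathbf{T}}$ commutes---or, as you also suggest, compute directly $\widehat{T}_1\widehat{T}_2=\sqrt{P}\,T_1V_2\sqrt{P}=\sqrt{P}\,T_2V_1\sqrt{P}=\widehat{T}_2\widehat{T}_1$ using the identity you already established.
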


\begin{lemma} \label{criss thm} \ (cf. \cite{BenZe1} and \cite{BenZe2}) \ Let $\mathbf{A}$ criss-cross commute with $\mathbf{B}$ on $\mathcal{H}$, and assume that $\mathbf{AB}$ is commuting. \ Then $\sigma_T(\mathbf{BA}) \setminus \{\mathbf{0}\}=\sigma_T(\mathbf{AB}) \setminus \{\mathbf{0}\}$.
\end{lemma}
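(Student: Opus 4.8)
The plan is to reduce the $n$-variable assertion to the familiar one-variable identity $\sigma(ST)\setminus\{0\}=\sigma(TS)\setminus\{0\}$, carried out in a single coordinate, and to observe that the criss-cross relations are exactly what is needed to promote that scalar trick to a comparison of the two Koszul complexes.

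Fix $\lambda=(\lambda_1,\dots,\lambda_n)\neq\mathbf 0$ and pick an index $m$ with $\lambda_m\neq 0$. I would realize both Koszul complexes on the total space $\mathcal H\otimes\Lambda$, where $\Lambda=\Lambda^{\bullet}\mathbb C^{n}$, by means of the exterior (``creation'') differentials
\[
\partial:=\sum_{i=1}^{n}(A_iB_i-\lambda_i)\otimes b_i,\qquad
\widetilde\partial:=\sum_{i=1}^{n}(B_iA_i-\lambda_i)\otimes b_i,
\]
where $b_i$ denotes left exterior multiplication by $e_i$ and $a_i$ its adjoint, contraction against $e_i$, so that $a_ib_j+b_ja_i=\delta_{ij}$. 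Here $\partial^{2}=0$ because $\mathbf{AB}$ is commuting, and $\widetilde\partial^{\,2}=0$ because $\mathbf{BA}$ is then commuting too (the Remark following Definition \ref{criss}). Now introduce the candidate intertwiners
\[
\Phi:=B_m\otimes 1\colon K(\mathbf{AB}-\lambda,\mathcal H)\to K(\mathbf{BA}-\lambda,\mathcal H),\qquad
\Psi:=A_m\otimes 1\colon K(\mathbf{BA}-\lambda,\mathcal H)\to K(\mathbf{AB}-\lambda,\mathcal H),
\]
i.e.\ multiplication by $B_m$, respectively $A_m$, in every homological degree.

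The first step is to check that $\Phi$ and $\Psi$ are chain maps. Since neither disturbs the exterior factor, $\Phi\partial=\widetilde\partial\Phi$ is equivalent to $B_m(A_iB_i-\lambda_i)=(B_iA_i-\lambda_i)B_m$ for every $i$, that is, to $B_mA_iB_i=B_iA_iB_m$; this is trivial for $i=m$, and for $i\neq m$ it is precisely the criss-cross relation $B_pA_qB_r=B_rA_qB_p$ with $p=m$ and $q=r=i$. Likewise $\Psi\widetilde\partial=\partial\Psi$ reduces to $A_mB_iA_i=A_iB_iA_m$, another instance of criss-cross commutativity---and this is the only place the hypothesis is used. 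The second step is to identify the composites. Both $\Psi\Phi$ and $\Phi\Psi$ are multiplication by $A_mB_m$, respectively $B_mA_m$, in every degree, and with $h:=1\otimes a_m$ (a degree $-1$ operator) the exterior relations yield
\[
\partial h+h\partial=\sum_{i=1}^{n}(A_iB_i-\lambda_i)\otimes(b_ia_m+a_mb_i)=(A_mB_m-\lambda_m)\otimes 1 .
\]
Hence $\Psi\Phi=(A_mB_m)\otimes 1=(A_mB_m-\lambda_m)\otimes 1+\lambda_m\,\mathrm{id}=(\partial h+h\partial)+\lambda_m\,\mathrm{id}$, so $\Psi\Phi\simeq\lambda_m\,\mathrm{id}$ on $K(\mathbf{AB}-\lambda,\mathcal H)$; the identical computation with $\widetilde\partial$ gives $\Phi\Psi\simeq\lambda_m\,\mathrm{id}$ on $K(\mathbf{BA}-\lambda,\mathcal H)$.

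Since $\lambda_m\neq 0$, $\Phi$ and $\tfrac{1}{\lambda_m}\Psi$ are mutually inverse homotopy equivalences, so the two Koszul complexes have isomorphic homology and are exact simultaneously. To see that the closed-range part of exactness transfers as well, I would invoke the standard fact that a bounded cochain complex of Banach spaces is exact if and only if it admits a bounded contracting homotopy; given such a homotopy on $K(\mathbf{BA}-\lambda,\mathcal H)$, the chain map $\Phi$ is null-homotopic, hence so are $\Psi\Phi$ and, because $\lambda_m\neq 0$, the identity of $K(\mathbf{AB}-\lambda,\mathcal H)$, and symmetrically. Thus $\lambda\notin\sigma_T(\mathbf{AB})\iff\lambda\notin\sigma_T(\mathbf{BA})$ for every $\lambda\neq\mathbf 0$, which is the assertion. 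The argument presents no analytic obstacle; the only point requiring care is the bookkeeping---checking that the chain-map and homotopy identities really do collapse to the quoted special cases of criss-cross once everything is written in the $\otimes b_i,\ \otimes a_i$ formalism, and making sure that $\Phi$, $\Psi$, $h$ and the contracting homotopy are all bounded so that the homology-level conclusion genuinely upgrades to exactness with closed ranges.
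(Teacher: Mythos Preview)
Your argument is correct. The paper does not give its own proof of this lemma; it simply records the statement and cites \cite{BenZe1}, \cite{BenZe2} (and, for the Fredholm version in Theorem~\ref{Theorem 9}, also \cite{Li1}, \cite{Li2}). What you have written is essentially the proof that appears in those references: one picks a coordinate $m$ with $\lambda_m\neq 0$, uses left multiplication by $B_m$ and $A_m$ as chain maps between the two Koszul complexes, and exhibits the contraction $1\otimes a_m$ to show that the composites are chain-homotopic to $\lambda_m\cdot\mathrm{id}$. Your verification of the chain-map identities is exactly right---$B_mA_iB_i=B_iA_iB_m$ and $A_mB_iA_i=A_iB_iA_m$ are the only places criss-cross commutativity enters---and your handling of the Banach-space exactness (splitting homotopy $\Leftrightarrow$ exactness with closed ranges, then transport the splitting through $\Phi$ and $\Psi$) is the standard and correct way to upgrade the algebraic homotopy equivalence to genuine Taylor invertibility. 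There is no gap.
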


We now prove our third main result.

\begin{theorem} \label{Theorem 8} Let $\mathbf{T}=\mathbf{(}T_{1},T_{2})$ be a
commuting pair of operators on $\mathcal{H}$. \ Then
$$
\sigma_T(\mathbf{T})=\sigma_T(\widehat{\mathbf{T}}).
$$
\end{theorem}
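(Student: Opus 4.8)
The plan is to reduce the global equality $\sigma_T(\mathbf{T})=\sigma_T(\widehat{\mathbf{T}})$ to a local analysis at each point $\boldsymbol{\lambda}=(\lambda_1,\lambda_2)\in\mathbb{C}^2$, splitting into the case $\boldsymbol{\lambda}\neq\mathbf{0}$ and the case $\boldsymbol{\lambda}=\mathbf{0}$. First I would observe that for $\boldsymbol{\lambda}\neq\mathbf{0}$ the translation $\mathbf{T}-\boldsymbol{\lambda}$ is \emph{not} of the form $\mathbf{B}\mathbf{A}$ in any obvious way, so the criss-cross machinery does not apply directly to the translated tuple; instead one must compare $\widehat{\mathbf{T}}=\mathbf{B}\mathbf{A}$ with $\mathbf{A}\mathbf{B}=(\sqrt{P}V_1\sqrt{P},\sqrt{P}V_2\sqrt{P})$--wait, that is the same tuple. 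The correct move is: by the Lemma, $\mathbf{A}=(\sqrt P,\sqrt P)$ criss-cross commutes with $\mathbf{B}=(V_1\sqrt P,V_2\sqrt P)$, and $\mathbf{AB}=(\sqrt P V_1\sqrt P,\sqrt P V_2\sqrt P)=\widehat{\mathbf T}$ while $\mathbf{BA}=(V_1 P, V_2 P)=(T_1,T_2)=\mathbf{T}$ (using $T_i=V_iP$ and $V_i\sqrt P\sqrt P=V_iP$). Hence Lemma~\ref{criss thm} applies verbatim and yields immediately
\begin{equation*}
\sigma_T(\mathbf{T})\setminus\{\mathbf{0}\}=\sigma_T(\mathbf{BA})\setminus\{\mathbf{0}\}=\sigma_T(\mathbf{AB})\setminus\{\mathbf{0}\}=\sigma_T(\widehat{\mathbf{T}})\setminus\{\mathbf{0}\}.
\end{equation*}
This disposes of every point except possibly the origin.

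It then remains to show that $\mathbf{0}\in\sigma_T(\mathbf{T})$ if and only if $\mathbf{0}\in\sigma_T(\widehat{\mathbf{T}})$; equivalently, $\mathbf{T}$ is Taylor invertible iff $\widehat{\mathbf{T}}$ is, which is precisely the content of Theorem~\ref{Theorem 7}. So the third main result is really a corollary of the first two results together with the criss-cross lemma: the two-sided Taylor invertibility statement at the origin is Theorem~\ref{Theorem 7}, and its proof is where the real work lies. For that, I would argue as follows. If $\mathbf{T}$ (or $\widehat{\mathbf{T}}$) is Taylor invertible, then in particular it is left invertible, so by Theorem~\ref{basic} the operator $P$ is invertible. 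With $P$ invertible, $\sqrt P$ and $\sqrt P^{-1}$ are bounded invertible operators, and one checks that the unitary-type conjugation by (an appropriate diagonal combination of) $\sqrt P$ gives an isomorphism of Koszul complexes $K(\mathbf{T},\mathcal H)\cong K(\widehat{\mathbf{T}},\mathcal H)$: indeed $\sqrt P\,(\sqrt P V_i\sqrt P)\,(\sqrt P)^{-1}=P V_i = \sqrt P(\sqrt P V_i)$, and more to the point $\widehat{T}_i=\sqrt P^{-1}\,(\sqrt P V_i P)\,(\sqrt P)^{-1}$ while $T_i=V_iP$, so that $T_i$ and $\widehat T_i$ are intertwined by the invertible operator $\sqrt P$ in the sense $\sqrt P\,\widehat T_i=\sqrt P V_i \sqrt P\cdot\sqrt P=\sqrt P V_i P$ and $\sqrt P V_i P\cdot(\sqrt P)^{-1}\cdot\sqrt P=T_i\sqrt P$. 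Thus $\sqrt P\,\widehat T_i=T_i\,\sqrt P$ for $i=1,2$, which says that the diagonal operators $\operatorname{diag}(\sqrt P,\ldots,\sqrt P)$ furnish a chain map between the two Koszul complexes that is an isomorphism in each degree; exactness of one complex then forces exactness of the other.

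The main obstacle, and the step that deserves the most care, is the intertwining identity $\sqrt P\,\widehat T_i = T_i\,\sqrt P$ and its compatibility with the \emph{differentials} of the Koszul complex--one must verify that the same invertible conjugation simultaneously intertwines all the boundary maps $\left(\begin{smallmatrix}T_1\\ T_2\end{smallmatrix}\right)$ with $\left(\begin{smallmatrix}\widehat T_1\\ \widehat T_2\end{smallmatrix}\right)$ and $(-T_2\ \ T_1)$ with $(-\widehat T_2\ \ \widehat T_1)$, so that the squares in the ladder commute and we genuinely get an isomorphism of complexes rather than merely a degreewise isomorphism of spaces. A secondary subtlety is checking the decomposition $\mathbf{T}=\mathbf{B}\mathbf{A}$ and $\widehat{\mathbf{T}}=\mathbf{A}\mathbf{B}$ carefully, since $V_i\sqrt P\sqrt P=V_iP=T_i$ uses nothing more than functional calculus but relies on the support condition $\ker P=\ker V_i$ to ensure $V_i\sqrt P$ is well defined as written. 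Once $P$ is known to be invertible both of these issues evaporate, so the entire argument hinges on Theorem~\ref{basic}; granting it and Lemma~\ref{criss thm}, the proof of Theorem~\ref{Theorem 8} is short: off the origin use criss-cross commutativity, at the origin use invertibility of $P$ to build the Koszul-complex isomorphism, and conclude $\sigma_T(\mathbf{T})=\sigma_T(\widehat{\mathbf{T}})$.
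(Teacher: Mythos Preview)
Your approach is correct and mirrors the paper's proof exactly: split at the origin, invoke Theorem~\ref{Theorem 7} for $\boldsymbol{\lambda}=\mathbf{0}$, and apply the criss-cross Lemma~\ref{criss thm} (with $\mathbf{AB}=\widehat{\mathbf T}$ and $\mathbf{BA}=\mathbf T$) for $\boldsymbol{\lambda}\neq\mathbf{0}$. One small correction in your supplementary sketch of Theorem~\ref{Theorem 7}: the intertwining runs the other way, namely $\widehat T_i\sqrt P=\sqrt P V_i\sqrt P\cdot\sqrt P=\sqrt P V_iP=\sqrt P\,T_i$, whereas your claimed identity $\sqrt P\,\widehat T_i=T_i\sqrt P$ would require $PV_i=V_iP$; since $\sqrt P$ is invertible either direction furnishes the Koszul-complex isomorphism, so the conclusion is unaffected.
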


\begin{proof}
Let $\lambda \in \mathbb{C}^2$. \ If $\lambda =(0,0)$, use Theorem \ref{Theorem 7}; if $\lambda \ne (0,0)$, use Lemma \ref{criss thm}.
\end{proof}

\begin{remark}
(i) \ Theorems \ref{basic}, \ref{Theorem 7} and \ref{Theorem 8} can be easily extended to other spectral systems whose definition is given in terms of the Koszul complex; e.g., the left $k$-spectral systems $\sigma_{\pi,k}$ defined by W. S\l odkowski and W. \.{Z}elazko (\cite{Slo}, \cite{SZ}). \ For, the Proof of Theorem \ref{basic} (which uses only left invertibility of the relevant Koszul complex) works well in case $\mathbf{T}$ or $\widehat{\mathbf{T}}$. \ Once we know that $\sqrt{P}$ is invertible, the Koszul complexes of $\mathbf{T}$ and $\widehat{\mathbf{T}}$ are isomorphic, so $0 \notin \sigma_{\pi,k}(\mathbf{T})$ if and only if $0 \notin \sigma_{\pi,k}(\widehat{\mathbf{T}})$. \newline
(ii) \ Similarly, Theorem \ref{criss thm} admits an easy extension to S\l odkowski's left $k$-spectra (cf. \cite{BenZe1}, \cite{BenZe2}), since the Proof of Theorem \ref{criss thm} relies on the isomorphism of the Koszul complexes for $\mathbf{T}$ and $\widehat{\mathbf{T}}$, implemented by $\sqrt{P}$. \newline
(iii) \ On the other hand, the above results cannot be extended to S\l odkowski's right $k$-spectra; for, consider the adjoint $U_+^*$ of the (unweighted) unilateral shift $U_+$. \ It is easy to see that $U_+^*$ is onto while $\widehat{U_+^*}$ is not.
\end{remark}

Our final main result deals with Fredholmness.

\begin{theorem} \label{Theorem 9} Let $\mathbf{T}=\mathbf{(}T_{1},T_{2})$ be a
commuting pair of operators on $\mathcal{H}$. \ Then
$$
\sigma_{Te}(\mathbf{T})=\sigma_{Te}(\widehat{\mathbf{T}}).
$$
Moreover, for each $\lambda \notin \sigma_{Te}(\mathbf{T})$ we have 
$$
\textrm{ind }(\mathbf{T}-\lambda)=\textrm{ind }(\widehat{\mathbf{T}}-\lambda),
$$
where $\textrm{ind}$ denotes the Fredholm index. 
\end{theorem}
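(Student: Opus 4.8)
The plan is to deduce the first assertion by running the arguments behind Theorems~\ref{basic}, \ref{Theorem 7} and \ref{Theorem 8} inside the Calkin algebra $\mathcal{Q}(\mathcal{H})$, and to obtain the index identity from the homological content of Lemma~\ref{criss thm} together with a connectedness observation. Throughout I use the fact recalled in the Introduction that, for a commuting pair $\mathbf{S}$, $\sigma_{Te}(\mathbf{S})=\sigma_{T}\!\left(L_{\pi(\mathbf{S})}\right)$, the Taylor spectrum of the pair of left multiplications $L_{\pi(\mathbf{S})}$ acting on the Banach space $\mathcal{Q}(\mathcal{H})$. The key point is that the proofs of Theorems~\ref{basic}, \ref{Theorem 7} and \ref{Theorem 8} use only: the algebraic identity $\sqrt{P}\,T_i=\widehat{T}_i\,\sqrt{P}$; the inequality $V_1^{*}V_1+V_2^{*}V_2\le I$ coming from the joint partial isometry; the equality $P=\sqrt{T_1^{*}T_1+T_2^{*}T_2}$; and the criss-cross commutativity of $\mathbf{A}=(\sqrt{P},\sqrt{P})$ and $\mathbf{B}=(V_1\sqrt{P},V_2\sqrt{P})$ with $\mathbf{A}\mathbf{B}$ commuting. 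Since $\pi$ is a unital $*$-homomorphism, all of these survive under $\pi$: $\pi(P)=\sqrt{\pi(T_1)^{*}\pi(T_1)+\pi(T_2)^{*}\pi(T_2)}\ge 0$, the pair $(\pi(V_1),\pi(V_2))$ is a joint partial isometry in $\mathcal{Q}(\mathcal{H})$ (because $\pi(V_1)^{*}\pi(V_1)+\pi(V_2)^{*}\pi(V_2)$ is a projection), $\pi(\widehat{T}_i)=\sqrt{\pi(P)}\,\pi(V_i)\,\sqrt{\pi(P)}$, $\sqrt{\pi(P)}\,\pi(T_i)=\pi(\widehat{T}_i)\,\sqrt{\pi(P)}$, and $\pi(\mathbf{A})$, $\pi(\mathbf{B})$ criss-cross commute with $\pi(\mathbf{A})\pi(\mathbf{B})$ commuting (apply $\pi$ to the identities in Definition~\ref{criss}).

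First I would establish $\sigma_{Te}(\mathbf{T})=\sigma_{Te}(\widehat{\mathbf{T}})$ in the same two stages as Theorem~\ref{Theorem 8}. \emph{At the origin:} I claim the Calkin analogue of Theorem~\ref{basic} holds — if $L_{\pi(\mathbf{T})}$ or $L_{\pi(\widehat{\mathbf{T}})}$ is bounded below on $\mathcal{Q}(\mathcal{H})$ (equivalently, $\mathbf{T}$ or $\widehat{\mathbf{T}}$ is left Fredholm), then $\pi(P)$ is invertible in $\mathcal{Q}(\mathcal{H})$ (equivalently, $P$ is Fredholm) — and its proof is word for word that of Theorem~\ref{basic}, with the scalar estimates replaced by $C^{*}$-norm estimates and using that a column over a $C^{*}$-algebra is left invertible exactly when $\sum_i a_i^{*}a_i$ is invertible. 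Granting this: if $\pi(P)$ is invertible then $\sqrt{\pi(P)}$ is invertible and, by the intertwining above, implements a similarity between $L_{\pi(\mathbf{T})}$ and $L_{\pi(\widehat{\mathbf{T}})}$, so $\mathbf 0\in\sigma_{Te}(\mathbf{T})\Leftrightarrow\mathbf 0\in\sigma_{Te}(\widehat{\mathbf{T}})$; if $\pi(P)$ is not invertible, then neither pair is even left invertible in $\mathcal{Q}(\mathcal{H})$, so $\mathbf 0$ lies in both essential spectra. \emph{Away from the origin:} I would apply Lemma~\ref{criss thm} — whose proof is homological and remains valid for tuples of operators on any Banach space — to the pairs of left multiplications $L_{\pi(\mathbf{A})}$ and $L_{\pi(\mathbf{B})}$ on $\mathcal{Q}(\mathcal{H})$. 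These criss-cross commute (the relations in Definition~\ref{criss} are preserved by the unital homomorphism obtained by composing $\pi$ with the left-regular representation), and $L_{\pi(\mathbf{A})}L_{\pi(\mathbf{B})}=L_{\pi(\mathbf{A}\mathbf{B})}$ is commuting, so $\sigma_{T}\!\left(L_{\pi(\mathbf{B}\mathbf{A})}\right)\setminus\{\mathbf 0\}=\sigma_{T}\!\left(L_{\pi(\mathbf{A}\mathbf{B})}\right)\setminus\{\mathbf 0\}$; since $\{\mathbf{T},\widehat{\mathbf{T}}\}=\{\mathbf{A}\mathbf{B},\mathbf{B}\mathbf{A}\}$ and $\sigma_{Te}(\mathbf{S})=\sigma_{T}(L_{\pi(\mathbf{S})})$, this reads $\sigma_{Te}(\mathbf{T})\setminus\{\mathbf 0\}=\sigma_{Te}(\widehat{\mathbf{T}})\setminus\{\mathbf 0\}$. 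The two stages together give $\sigma_{Te}(\mathbf{T})=\sigma_{Te}(\widehat{\mathbf{T}})$.

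For the index identity I would again split at $\mathbf 0$. For $\lambda\ne\mathbf 0$, the proof of Lemma~\ref{criss thm} does more than compare exactness: as noted in Remark~(ii) above, it produces an isomorphism of the Koszul complexes $K(\mathbf{T}-\lambda,\mathcal{H})$ and $K(\widehat{\mathbf{T}}-\lambda,\mathcal{H})$ — in the one–variable model, the maps $A$ and $\lambda^{-1}B$ set up mutually inverse isomorphisms between the kernels and between the cokernels of $AB-\lambda$ and $BA-\lambda$, and the Koszul version is the natural extension. Isomorphic Koszul complexes have isomorphic homology, so when $\lambda\notin\sigma_{Te}(\mathbf{T})$ they have equal (finite) Euler characteristics, i.e. $\operatorname{ind}(\mathbf{T}-\lambda)=\operatorname{ind}(\widehat{\mathbf{T}}-\lambda)$ for all $\lambda\in\mathbb{C}^{2}\setminus\bigl(\sigma_{Te}(\mathbf{T})\cup\{\mathbf 0\}\bigr)$. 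To pass to $\lambda=\mathbf 0$, assume $\mathbf 0\notin\sigma_{Te}(\mathbf{T})$ (otherwise there is nothing to prove) and let $\Omega$ be the connected component of the open set $U:=\mathbb{C}^{2}\setminus\sigma_{Te}(\mathbf{T})=\mathbb{C}^{2}\setminus\sigma_{Te}(\widehat{\mathbf{T}})$ that contains $\mathbf 0$. Since $\Omega$ is a nonempty connected open subset of $\mathbb{R}^{4}$, the punctured set $\Omega\setminus\{\mathbf 0\}$ is still connected and nonempty; the functions $\lambda\mapsto\operatorname{ind}(\mathbf{T}-\lambda)$ and $\lambda\mapsto\operatorname{ind}(\widehat{\mathbf{T}}-\lambda)$ are locally constant on $U$, hence each is constant on $\Omega$, and they coincide on $\Omega\setminus\{\mathbf 0\}$ by the previous step; therefore they coincide on all of $\Omega$, and in particular $\operatorname{ind}(\mathbf{T})=\operatorname{ind}(\widehat{\mathbf{T}})$.

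The main obstacle I anticipate is pinning down exactly what the criss-cross construction of \cite{BenZe1} and \cite{BenZe2} delivers for $\lambda\ne\mathbf 0$: the index step needs not merely the equality of Taylor spectra off $\mathbf 0$, but an honest isomorphism — or at least a homology isomorphism, equivalently a chain homotopy equivalence — of the two Koszul complexes, so that the Fredholm index is transported. Remark~(ii) asserts that this is what the proof yields, but if those references record only the spectral statement I would have to revisit their construction and write out the explicit chain maps. A secondary, bookkeeping-type point is checking the $C^{*}$-algebraic facts used at the origin — that left invertibility of a column is governed by invertibility of $\sum_i a_i^{*}a_i$, and that left Fredholmness of $\mathbf{T}$ is the same as bounded-belowness of $L_{\pi(\mathbf{T})}$ — but these are precisely the ingredients already imported from \cite{Cu2} and \cite{Appl} in the Introduction.
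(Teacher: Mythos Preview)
Your proposal is correct and follows essentially the same two-stage strategy as the paper's sketch: the Fredholm analogue of Theorem~\ref{basic} at the origin, criss-cross results away from it, and continuity/local constancy of the index to cover $\lambda=\mathbf{0}$. The only cosmetic difference is that you pass wholesale to the Calkin algebra and rerun Theorems~\ref{basic}--\ref{Theorem 8} there, whereas the paper stays on $\mathcal{H}$ and argues directly that $\sqrt{P}$ is Fredholm (bounded below on $(\ker T_1\cap\ker T_2)^{\perp}$, with the kernel finite dimensional); and the obstacle you flag---whether the criss-cross machinery actually transports the index for $\lambda\ne\mathbf{0}$---is exactly what the paper handles by invoking Li's results \cite{Li1}, \cite{Li2} rather than re-deriving a chain isomorphism.
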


\begin{proof}[Sketch of Proof]
In Theorem \ref{basic} one can replace ``left invertible" for the Koszul complex with ``left Fredholm" and ``invertible" for $P$ with ``Fredholm." \ A similar adjustment works for Theorems \ref{Theorem 7} and \ref{Theorem 8}. \ In the analog of Theorem \ref{Theorem 7} one first proves that $\sqrt{P}$ is bounded below in the orthogonal complement of $\ker T_1 \cap \ker T_2$; since this kernel is finite dimensional, it follows that $\sqrt{P}$ is Fredholm. \ In Theorem \ref{Theorem 8} one needs to replace Lemma \ref{criss thm} with the results for Fredholmness proved in \cite{BenZe1}, \cite{BenZe2}, \cite{Li1} and \cite{Li2}. \ While Li's results only guarantee that $\textrm{ind }(\mathbf{T}-\lambda)=\textrm{ind }(\widehat{\mathbf{T}}-\lambda)$ whenever $\lambda \ne (0,0)$, the continuity of the Fredholm index (cf. \cite{Cu2}) does the rest.
\end{proof}


\end{document}